\newcommand{\R}{{\mathbb R}}
\newcommand{\sign}{{\rm sign\,}}
\newcommand{\ol} {\overline}
\newcommand{\A}{{\mathcal  A}}
\newcommand{\diam}{{\rm diam\,}}
\newcommand{\loc}{{\rm loc}}
\newcommand{\osc}{{\rm osc\,}}
\numberwithin{equation}{section}
\def\Xint#1{\mathchoice
    {\XXint\displaystyle\textstyle{#1}}%
     {\XXint\textstyle\scriptstyle{#1}}%
     {\XXint\scriptstyle\scriptscriptstyle{#1}}%
     {\XXint\scriptstyle\scriptscriptstyle{#1}}%
    \!\int}
\def\XXint#1#2#3{{\setbox0=\hbox{$#1{#2#3}{\int}$}
    \vcenter{\hbox{$#2#3$}}\kern-.5\wd0}}
\newtheorem{thm}{Theorem}[section]  
\newtheorem{lem}[thm]{Lemma}           
\newtheorem{prp}[thm]{Proposition}     
\newtheorem{defin}[thm]{Definition}    
\begin{document}

\keywords{Quasilinear   parabolic equations,  Cauchy-Dirichlet problem,  $VMO_x$ coefficients,  Fixed Point Theorem,  strong solutions}

\subjclass[2020]{Primary: 35K60; Secondary:  35K20; 35R05}

\title[Quasilinear Cauchy-Dirichlet Problem]{Quasilinear Cauchy-Dirichlet problem for parabolic equations with $VMO_x$ coefficients}

\author[R. Rescigno]{Rosamaria Rescigno}

\begin{abstract}
We study the strong solvability of the Cauchy-Dirichlet problem for parabolic quasilinear 
equations with discontinuous data. The principal coefficients depend on the point $(x,t)$ and on the solution $u,$  the dependence on $x$ is of $VMO$ type while these are only measurable with respect to $t.$  Assuming suitable structural conditions on the nonlinear terms, we prove existence and uniqueness of the strong solution, which turns out to be also H\"older continuous.
\end{abstract}

\maketitle

\section{Introduction}
Let $Q=\Omega\times(0,T)$ be a cylinder in $\R^{n+1}$  where $\Omega\subset\R^n$  is a $C^{1,1}$-smooth and bounded domain, $T>0,$ and $\partial_pQ=\{\partial \Omega \times (0,T)\}\cup \{\overline{\Omega}\times\{0\}\}$ stands for the parabolic boundary of $Q.$
We consider the following Cauchy-Dirichlet problem
\begin{equation}\label{PD}
    \begin{cases}
    \mathcal{P}u := D_tu-a^{ij}(x,t,u)D_{ij}u=f(x,t,u,Du)  \qquad &\text{a.e in }Q\,,\\
  \phantom{\mathcal{P}u := }  u(x,t)=0 & \text{on } \partial_pQ\,.
    \end{cases}
\end{equation}

The classical theory concerning quasilinear elliptic and parabolic   PDEs  is well developed and can be found in various  books (cf. \cite{GT,LSU,LU}). 
After the seminal result of Chiarenza, Frasca, and  Longo \cite{CFL}, that extends  the  unique solvability   theory  for  linear elliptic PDEs  with smooth coefficients to equations with discontinuous $VMO$  coefficients,  a lot of studies appeared  dedicated to linear and quasilinear equations with such  coefficients  (cf. \cite{AFS,ANPS3,BPS,Kr1,MPS}) and nonlinear operators with coefficients close to $VMO$  (see \cite{BSf,RPS,RS} and the references therein).     Regarding the parabolic case, Krylov \cite{Kr} weakened the $VMO$ condition  on the coefficient 
$\{a^{ij}(x,t)\}$ assuming   \textit{partially} $VMO$, that means that the discontinuity  is of $VMO$-type \textit{only} with respect to  the space variables $x,$  while the coefficients are allowed to be \textit{only measurable} with respect to the time variable $t.$ Later, his results have been  used   to study  various initial-boundary value problems for linear parabolic operators with partially $VMO$ principal coefficients, see  \cite{ANPS1,ANPS2,DK,KimKr}. 

As it is known,  the linear  theory concerning existence and uniqueness of solitions to various  boundary value problems  is a starting point for the study of quasilinear and nonlinear PDEs  under different  conditions on the boundary. In this context, we can mention the work of Palagachev \cite{Pl} where quasilinear elliptic equations with $VMO$-type discontinuity  assumption on the coefficients have been studied for a first time. In that work the author evaluates the $VMO$ modulus of the composition $a^{ij}(\cdot, u(\cdot))$ that allows to apply the Leray-Schauder fixed point theorem in order to get strong solvability of  quasilinear Dirichlet problem. Similar result has been obtained  later in \cite{Sf} for  parabolic quasilinear oprerators with Dirichlet and oblique derivative conditions on the boundary.

Our goal here is to extend these  results to the case of quasilinear parabolic operators with partially $VMO$ principal coefficients,  relying on the linear results of Krylov \cite{Kr} and  the Leray-Schauder fixed point theorem (cf. \cite{Pl,Sf}).

Problems like \eqref{PD} arise  in  various applications such as for instance the heat transfer, mathematical modeling of semiconductor devices, stock market modelling, etc.
Many elliptic and parabolic equations with discontinuous coefficients are often proposed in models of the  deformation  of  composite materials, in the mechanics of membranes and films of simple non-homogeneous materials which form a linear laminated medium.  

In what follows we use the standard notation:

for any measurable   function $f\in L^p(Q)$ we indicate the norm by $\|f\|_{p,Q};$

denote by $D_iu$ the partial derivatives $\partial u/\partial x_i,$ $i=1,\ldots,n;$
$D f=(D_1f,\ldots,D_nf)$  is the gradient of $f$ and  $D^2f=\{D_{ij}f\}_{i,j=}^n$ is the Hessian matrix of $f;$

 the Sobolev space $W^{2,1}_p(Q)$  consists of functions heaving distributional derivatives in $x$ up to second order and one derivative in $t,$ endowed by the norm
 $$ 
 \|f\|_{W^{2,1}_p}(Q)=\|f\|_{p,Q}+\|Df\|_{p,Q}+\|D^2f\|_{p,Q}+\|u_t\|_{p,Q};
 $$ 

the letter  $C$ indicates a positive constant which value  varies from one appearance to another  and the standard summation on the repeated lower and upper indexes is adopted.

\section{Definitions and main assumptions}

Let $\mathcal{B}_r(x)=\{y\in \R^n: \ |x-y|<r\}$ be a ball in $\R^n$ and   $I_r(x,t)= \mathcal{B}_r(x)\times (t,t+r^2)$ be a \textit{parabolic cylinder} in $\R^n\times\R^+.$
For an arbitrary function $a\in L^1_\loc(\R^{n+1})$ we consider the \textit{mean oscillation} of $a$ with respect to $x$ in  $I_r(x,t)$ (see \cite{ANPS1,Kr})
$$
   \osc_x(a,I_r(x,t))= \Xint-_{t}^{t+r^2} \Xint-_{\mathcal{B}_r(x)} \Xint-_{\mathcal{B}_r(x)} |a(y ,\tau )- a(z ,\tau )|\,dy\,dz\,d\tau 
$$
and denote
$$
    a_R^{\#(x)}=   \sup_{(x,t)\in\R^{n+1}}  \,\sup_{r\leq R}\,\osc_x(a,I_r(x,t)).
$$
We say that $a\in VMO_x$ if $\lim_{R\to0}  a_R^{\#(x)} =0$. 

\begin{defin} A function $u(x,t)$ is a strong solution of \eqref{PD} if it is twice weakly differentiable in $x$ and once in $t$ with derivatives belonging to $L^{n+1}(Q)$, that is 
$u\in W^{2,1}_{n+1}(Q)\cap C(\overline{Q}),$ $u\vert_{\partial Q}=0,$ and satisfies $ \mathcal{P}u=f$ almost everywhere  in $Q$.
\end{defin}
For our convenience we suppose that the data  $a^{ij}$ and $f$ are extended as $0$ for $(x,t)\not\in \overline{Q}$. This permits us to extend the assumptions on  the coefficients in the  whole space. Let $Q_r(x,t)=I_r(x,t)\cap Q$ with $(x,t)\in Q$ and $\Omega_r=\mathcal{B}_r(x)\cap \Omega$.
We study the strong solvability of \eqref{PD}, imposing the following conditions on the data.

\begin{enumerate}
    \item $a^{ij}(x,t,u)$ and $f(x,t,u,p)$ are \textit{Carath\'eodory's functions}, that is, these are measurable in $(x,t)\in Q$ for all $(u,p)\in \R\times \R^n$ and continuous with respect to the other variables for a.a.  $(x,t)\in Q;$

    \item \textit{Strict parabolicity} of the operator $\mathcal{P}$: there exists a positive and non-increasing function $\lambda$ such that
    $$
    a^{ij}(x,t,u) \xi^i\xi^j\geq \lambda(|u|)|\xi|^2 \quad \text{a.e. in } Q, \ \forall\, \xi\in\R^n,
    $$
    and the matrix $\A=\{a^{ij}\}$ is symmetric;
    
    \item $a^{ij}(\cdot,u)\in VMO_x(Q)\cap L^\infty(Q)$  locally uniformly in $u\in[-M,M]$  for all $ M>0,$ that is, setting
    \begin{equation*}
    \begin{split}
         a_R^{ij\#(x)}:=\sup_{|u|\leq M} \sup_{r\leq R}\sup_{(x,t)\in Q}
    \Xint-_{Q_r(x,t)} \bigg(\Xint-_{\Omega_r(x)}|a^{ij}(y,t,u) \\
   -a^{ij}(z,t,u)|\,dz\bigg)dy\,dt,
    \end{split}
   \end{equation*}
    $$
    \gamma_M^{\#(x)}(R):= \max_{1\leq i,j\leq n} a_R^{ij\#(x)},
    $$
    we require $\lim_{R\rightarrow{0}} \gamma_M(R)\rightarrow 0;$ 
    
    \item \textit{Local uniform continuity} of $a^{ij}(x,t,u)$ with respect to $u$, uniformly in $(x,t)$:
    $$
    |a^{ij}(x,t,u)-a^{ij}(x,t,\overline{u})|\leq  \mu_M(|u-\overline{u}|) \quad \text{a.e. in $Q$} 
    $$
    for any $u,\overline{u}\in[-M;M]$. The function $\mu_M(\cdot)$ is non-decreasing and such that $\mu_M(\delta)\to 0$ for $\delta\rightarrow 0$. In addition $a^{ij}(x,t,0)\in L^\infty(Q);$ 
    
    \item \textit{Quadratic gradient growth}: 
    $$
    |f(x,t,u,p)|\leq \nu(|u|)(f_1(x,t)+|p|^2) \qquad \text{a.e. in $Q$}
    $$
    where $f_1\in L^{n+1}(Q)$ and $\nu(\cdot)$ is a positive and non-decreasing function;

    \item \textit{Sign condition} on $f$ with respect to $u$:
    $$
    \dfrac{\sign u\, f(x,t,u,p)}{\lambda(|u|)}\leq \nu_1(x,t)|p| +\nu_2(x,t) \quad \text{a.e. in Q}
    $$
    where $|u|>0$ and $\nu_1,\nu_2\in L^{n+1}(Q)$ are non-negative functions.
    \end{enumerate}

We are in a position now to announce  our main results. 
\begin{thm}[Existence]\label{thm1}
    Suppose that the conditions $(1)-(6)$ are fulfilled.
    Then the problem \eqref{PD} has a strong solution $ u\in W^{2,1}_{n+1}(Q)\cap C(\overline{Q})$.
\end{thm}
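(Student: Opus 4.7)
I intend to apply the Leray-Schauder fixed-point theorem in the Banach space $X=C(\overline{Q})$, in the spirit of \cite{Pl,Sf}. For each $v$ lying in a bounded subset of $X\cap W^{2,1}_{n+1}(Q)$ with $\|v\|_\infty\le M$, define $T(v)=w$ to be the strong solution of the linearized Cauchy-Dirichlet problem
\begin{equation*}
 D_t w-a^{ij}(x,t,v)D_{ij}w=f(x,t,v,Dv) \text{ in } Q, \qquad w=0 \text{ on } \partial_p Q.
\end{equation*}
A fixed point of $T$ is a strong solution of \eqref{PD}, so the proof reduces to three ingredients: well-posedness of the frozen problem and continuity/compactness of $T$; an a priori bound, uniform in $\sigma\in[0,1]$, for every solution of $u=\sigma T(u)$.

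\textbf{Linear solvability and properties of $T$.} Assumption (4) gives $\bar a^{ij}(x,t):=a^{ij}(x,t,v(x,t))\in L^\infty(Q)$ with a bound independent of $v$ when $\|v\|_\infty\le M$. To verify $\bar a^{ij}\in VMO_x$, I would split the mean oscillation of $\bar a^{ij}$ on $I_r(x,t)$ into a piece controlled by $\gamma_M^{\#(x)}(r)$ (assumption (3)) and a piece controlled by $\mu_M(\omega_v(r))$, where $\omega_v$ is the modulus of continuity of $v$ on $\overline Q$, following the composition argument of \cite{Pl}. Together with strict parabolicity (2), Krylov's $L^p$-theory for parabolic equations with partially $VMO$ principal coefficients \cite{Kr} then yields a unique $w\in W^{2,1}_{n+1}(Q)$ with
\begin{equation*}
 \|w\|_{W^{2,1}_{n+1}(Q)}\le C\,\|f(\cdot,v,Dv)\|_{n+1,Q}.
\end{equation*}
Continuity of $T$ on bounded subsets follows by subtracting equations and using assumption (1) with dominated convergence to pass to the limit in both coefficients and right-hand side; compactness is a consequence of the parabolic embedding $W^{2,1}_{n+1}(Q)\hookrightarrow\hookrightarrow C(\overline{Q})$ (valid since $n+1>(n+2)/2$).

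\textbf{A priori bounds.} Any solution of $u=\sigma T(u)$ satisfies $u=0$ on $\partial_p Q$ and
\begin{equation*}
 D_t u-a^{ij}(x,t,u)D_{ij}u=\sigma f(x,t,u,Du) \text{ in } Q.
\end{equation*}
Dividing by $\lambda(|u|)$ and invoking the sign condition (6), the parabolic Alexandrov-Bakelman-Pucci maximum principle gives $\|u\|_\infty\le M_0$, depending only on $\|\nu_1\|_{n+1,Q}$, $\|\nu_2\|_{n+1,Q}$ and the geometry of $Q$, uniformly in $\sigma$. Since $\lambda$ is non-increasing, $\lambda(|u|)\ge\lambda(M_0)>0$, so the equation is uniformly parabolic along the solution, and a Krylov-Safonov-type Hölder estimate—after treating the quadratic gradient term by the standard exponential substitution applied to (5)—produces $\|u\|_{C^{0,\alpha}(\overline{Q})}\le C_1$ for some $\alpha\in(0,1)$. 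Finally, with $u$ Hölder, the matrix $a^{ij}(x,t,u(x,t))$ is a fixed partially $VMO$ family with $VMO_x$-modulus depending only on $\gamma_{M_0}^{\#(x)}$ and $\mu_{M_0}$, and Krylov's $W^{2,1}_{n+1}$-estimate applied as to a linear problem, together with (5), yields $\|u\|_{W^{2,1}_{n+1}(Q)}\le M_1$ independent of $\sigma$.

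\textbf{Main obstacle.} The most delicate point is the last estimate, because $\sigma f(x,t,u,Du)$ carries the quadratic term $|Du|^2$, which a priori only lies in $L^{(n+1)/2}(Q)$—below the target integrability $L^{n+1}(Q)$. I would close this gap by a finite bootstrap: a $W^{2,1}_p$-bound for some $p>(n+1)/2$ gives, via the parabolic Sobolev embedding, higher integrability of $Du$; iterating finitely many times promotes $|Du|^2$ into $L^{n+1}(Q)$ and closes the estimate. Once the three uniform bounds are in place, the Leray-Schauder theorem delivers a fixed point $u\in W^{2,1}_{n+1}(Q)\cap C(\overline{Q})$ of $T$, which is the desired strong solution of \eqref{PD}.
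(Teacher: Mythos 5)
Your overall architecture (linearize, solve the frozen problem by Krylov's partially-$VMO_x$ theory, get $\|u\|_\infty$ from the ABP-type maximum principle via conditions (2) and (6), control the $VMO_x$ modulus of the composition $a^{ij}(\cdot,\cdot,v(\cdot,\cdot))$ through $\gamma_M^{\#(x)}$ and $\mu_M(\omega_v)$, and close with Leray--Schauder) coincides with the paper's (Lemmata~\ref{LM3.1}, \ref{LM3.2} and the proof of Theorem~\ref{thm1}). The gap is in your ``main obstacle'' paragraph. The difficulty with the quadratic gradient term is not one of integrability exponents: qualitatively, any $u\in W^{2,1}_{n+1}(Q)$ already has $Du\in L^{2(n+1)}(Q)$ by the parabolic Sobolev embedding, so there is nothing to bootstrap in that sense. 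The real problem is quantitative and your iteration on $p$ does not touch it. At every stage the only way to convert a bound on $|Du|^2$ into data for Krylov's estimate is the interpolation $\|Du\|_{2p,Q}^2\leq C\|u\|_{\infty,Q}\|D^2u\|_{p,Q}$, which feeds back into
$$
\|u\|_{W^{2,1}_{p}(Q)}\leq C\big(\|f_1\|_{p,Q}+\nu(M_0)\,M_0\,\|u\|_{W^{2,1}_{p}(Q)}\big),
$$
and this closes only if $C\,\nu(M_0)\,M_0<1$ --- a smallness condition you have no right to assume. Raising $p$ step by step reproduces the same inequality with the same non-small constant at each step, so the bootstrap never starts.

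The paper's Lemma~\ref{LM3.3} resolves exactly this point, and it is the one genuinely new ingredient you are missing. One introduces the one-parameter family \eqref{3.5} and proves (Proposition~\ref{prp1}) that two solutions $v_1=\mathcal{F}(\sigma_1)$, $v_2=\mathcal{F}(\sigma_2)$ satisfy $\|v_2-v_1\|_{\infty,Q}\leq(\sigma_2-\sigma_1)(M_u+\nu(M_u))$. Applying the Solonnikov interpolation inequality to the \emph{increment} $w=v_2-v_1$ rather than to $u$ itself replaces the factor $\|u\|_{\infty,Q}$ above by $\|w\|_{\infty,Q}\leq\tau(M_u+\nu(M_u))$, which \emph{can} be made small by choosing the step $\tau=\sigma_2-\sigma_1$ small; the quadratic term is then absorbed ($C_2\tau<1$), giving \eqref{3.9}, and finitely many steps covering $[0,1]$ yield the uniform bound $\|Du\|_{2(n+1),Q}\leq C$ of \eqref{3.3}. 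Each step still requires its own Leray--Schauder argument (the family $\mathfrak{F}_\rho$) to know the intermediate problems are solvable. Without this continuity-method device, or some substitute such as a Caccioppoli/exponential-test-function gradient estimate carried out in full, your a priori $W^{2,1}_{n+1}$ bound is not established and the fixed-point argument cannot be completed.
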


\begin{thm}[Uniqueness]\label{thm2}
Let the coefficients $a^{ij}$ of  $\mathcal{P}$ be independent of $u$ and the conditions $(1)-(6)$ be fulfilled.  Suppose, in addition, that $f(x,t,u,p)$  is non-increasing with respect to $u$ and Lipschitz continuous in $p,$ that is,  for all $p,p'\in \R^n$ it satisfies
\begin{equation}\label{uniq}
|f(x,t,u,p)-f(x,t,u,p')|\leq f_2(x,t,u)|p-p'| \quad a.e. \text{ in }Q,
\end{equation}
with $\sup_{|u|\leq M} f_2(x,t,u)\in L^{n+1}(Q)$ for any constant $M>0$. 

Then the problem \eqref{PD} cannot have more than one solution.
\end{thm}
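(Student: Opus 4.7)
\medskip

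\noindent\textbf{Proof plan for Theorem \ref{thm2}.} The strategy is to linearize the difference of two solutions and then invoke a parabolic Aleksandrov--Bakelman--Pucci (ABP) maximum principle. Suppose $u_1,u_2\in W^{2,1}_{n+1}(Q)\cap C(\overline{Q})$ are two strong solutions of \eqref{PD}, and set $w=u_1-u_2$. Since $W^{2,1}_{n+1}(Q)\hookrightarrow C(\overline{Q})$, both solutions are bounded, and I can fix $M>0$ with $\|u_1\|_\infty,\|u_2\|_\infty\le M$. Because $a^{ij}$ is independent of $u$, subtracting the two equations and adding and subtracting $f(x,t,u_1,Du_2)$ yields, a.e.\ in $Q$,
\[
D_t w - a^{ij}(x,t)D_{ij}w \;=\; \bigl[f(x,t,u_1,Du_1)-f(x,t,u_1,Du_2)\bigr] + \bigl[f(x,t,u_1,Du_2)-f(x,t,u_2,Du_2)\bigr].
\]

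The first bracket is handled via the Lipschitz assumption \eqref{uniq}: defining
\[
b_i(x,t)=\begin{cases}\dfrac{f(x,t,u_1,Du_1)-f(x,t,u_1,Du_2)}{|Dw|^2}\,D_iw, & Dw\neq 0,\\[1ex] 0, & Dw=0,\end{cases}
\]
I get measurable $b_i$ with $|b(x,t)|\le f_2(x,t,u_1(x,t))\le\sup_{|u|\le M}f_2(x,t,u)$, hence $b\in L^{n+1}(Q)$ by the hypothesis on $f_2$. The second bracket is treated using that $f$ is non-increasing in $u$: setting
\[
c(x,t)=\begin{cases}\dfrac{f(x,t,u_1,Du_2)-f(x,t,u_2,Du_2)}{w}, & w\neq 0,\\[1ex] 0, & w=0,\end{cases}
\]
gives $c(x,t)\le 0$ a.e.\ in $Q$. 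Consequently $w$ is a strong solution of the linear parabolic equation
\[
D_tw - a^{ij}(x,t)D_{ij}w - b_i(x,t)D_iw - c(x,t)\,w \;=\; 0 \quad\text{a.e.\ in }Q, \qquad w\big|_{\partial_p Q}=0.
\]

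I then apply the parabolic ABP--Krylov--Tso maximum principle to this equation (its hypotheses are met: uniformly parabolic principal part satisfying condition (2), drift $b\in L^{n+1}(Q)$, and sign-favourable zeroth-order coefficient $c\le 0$). The estimate yields
\[
\sup_Q w^+\le C\,\|0\|_{n+1,Q}=0 \quad\text{and}\quad \sup_Q w^-\le C\,\|0\|_{n+1,Q}=0,
\]
from which $w\equiv 0$ in $Q$, i.e.\ $u_1=u_2$.

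The only delicate point is the linearization step: one must verify that $b_i$ and $c$ defined above are genuinely measurable (routine, via the Carath\'eodory property of $f$ and measurability of $u_i$, $Du_i$) and that the resulting ABP estimate admits $VMO_x$-type discontinuous principal coefficients $a^{ij}$ together with merely $L^{n+1}$ lower-order terms; this is exactly the range in which the parabolic ABP estimate is available, so no additional regularity of the coefficients beyond conditions (2)--(4) is needed.
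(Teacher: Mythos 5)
Your proof is correct and follows essentially the same route as the paper: linearize the difference of two solutions via the Lipschitz and monotonicity hypotheses on $f$ and conclude with the Aleksandrov--Bakelman--Pucci/Krylov--Tso maximum principle for merely measurable uniformly parabolic coefficients with $L^{n+1}$ drift. The one point to tidy up is that your zeroth-order coefficient $c=(f(\cdot,u_1,Du_2)-f(\cdot,u_2,Du_2))/w$ need not be bounded (or in $L^{n+1}$) near $\{w=0\}$, so rather than invoking an ABP statement with a $c\,w$ term you should use that $c\le 0$ and $w>0$ on $\{w>0\}$ to absorb $c\,w\le 0$ into the right-hand side and apply the $c$-free estimate there (and symmetrically on $\{w<0\}$) --- which is exactly the $Q^{+}/Q^{-}$ splitting the paper performs.
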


\section{Auxiliary Lemmata}

The first step in our considerations is to obtain  boundedness of the solution, that  is essential for the proving of  existence . For this goal  we use the maximum principle for nonlinear parabolic operators obtained by Krylov in \cite{Kr2} and developed further by Nazarov and Ural'tseva in \cite{N} (see also \cite{T}.)
\begin{lem}\label{LM3.1}
Let $u\in W^{2,1}_{n+1}(Q) \cap C(\overline{Q})$, be a solution of  \eqref{PD} under the  assumptions (1),(2) and (6).  Then $u$ is essentially  bounded and
\begin{equation}\label{eq3.1}
\|u\|_{\infty,Q}\leq  C(n)d^{\frac{n}{n+1}}\Big( 1+ \frac{1}{d}\left\| \nu_1\right\|^{n+1}_{n+1,Q}\Big)\| \nu_2\|_{{n+1},Q}
\end{equation}
where $d=\diam\Omega$.    
\end{lem}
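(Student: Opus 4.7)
The plan is to apply the parabolic Aleksandrov--Krylov--Tso maximum principle in the version with a lower-order drift, developed by Krylov \cite{Kr2} and refined by Nazarov--Ural'tseva \cite{N} (see also \cite{T}). Conditions (2) and (6) are tailor-made so that \eqref{PD} collapses to an inequality of exactly the form required by that theorem.

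First I would reduce the problem to bounding $\sup_Q u^+$; the bound on $\sup_Q u^-$ then follows by applying the same argument to $-u$, which still satisfies (1), (2), (6) with the same $\lambda,\nu_1,\nu_2$. Since $u|_{\partial_p Q}=0$, the positive extrema are taken in the interior, so one may localize on the parabolic set $\{u>0\}$. On this set, combining the equation with assumption (6) gives
\[
u_t-a^{ij}(x,t,u)D_{ij}u \;=\; f(x,t,u,Du) \;\le\; \lambda(|u|)\bigl(\nu_1(x,t)|Du|+\nu_2(x,t)\bigr).
\]
By the strict parabolicity (2) every eigenvalue of $\A=\{a^{ij}\}$ is at least $\lambda(|u|)$, whence $(\det\A)^{1/n}\ge \lambda(|u|)$. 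Dividing through produces the scale-invariant inequality
\[
\frac{u_t-a^{ij}D_{ij}u}{(\det\A)^{1/n}} \;\le\; \nu_1|Du|+\nu_2 \qquad \text{a.e.\ on }\{u>0\},
\]
in which the (possibly degenerate) function $\lambda(|u|)$ no longer appears.

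This is precisely the normalized form to which the parabolic ABP--Krylov--Tso theorem with $L^{n+1}$ drift applies: for $w\in W^{2,1}_{n+1}(Q)\cap C(\overline Q)$ with $w\le 0$ on $\partial_p Q$ and satisfying such an inequality with drift coefficient $b\in L^{n+1}$ and right-hand side $g\in L^{n+1}$, one obtains
\[
\sup_Q w^+ \;\le\; C(n)\,d^{\,n/(n+1)}\Bigl(1+\tfrac{1}{d}\|b\|_{n+1,Q}^{\,n+1}\Bigr)\|g\|_{n+1,Q}.
\]
With $b=\nu_1$ and $g=\nu_2$ this gives the required bound for $u^+$, and the analogous argument for $(-u)^+$ yields \eqref{eq3.1}.

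The main obstacle is that \eqref{PD} is genuinely nonlinear and possibly non-uniformly elliptic, so a classical linear ABP cannot be invoked: the coefficients depend on $u$ through $\lambda(|u|)$, which is not bounded below uniformly. The key observation is that after normalization by $(\det\A)^{1/n}$ the factor $\lambda(|u|)$ cancels exactly against the one supplied by the sign condition (6), so the $u$-dependence of the coefficients disappears from the estimate. The secondary technical issue, namely the $|Du|$ term on the right-hand side, is handled by the Nazarov--Ural'tseva refinement of Krylov's estimate, which is precisely what produces the multiplicative correction $\bigl(1+\tfrac{1}{d}\|\nu_1\|_{n+1}^{n+1}\bigr)$ in \eqref{eq3.1}.
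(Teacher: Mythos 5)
Your proposal is correct and follows essentially the same route as the paper: restrict to the set where $u>0$, use conditions (2) and (6) to cancel the factor $\lambda(|u|)$ by a determinant/ellipticity normalization, absorb $\nu_1|Du|$ as an $L^{n+1}$ drift term, and invoke the Krylov--Nazarov--Ural'tseva parabolic maximum principle, then repeat for $-u$. The only cosmetic difference is that the paper divides the operator by $\lambda(|u|)$ and checks $\det\overline{\A}\ge 1$, whereas you divide by $(\det\A)^{1/n}$ directly; these are equivalent normalizations.
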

\begin{proof}
    Set $Q^+=\{(x,t)\in Q\colon\ u(x,t)>0\}$ for the subset  of  $Q$ where the solution $u$ is positive. In order to apply the maximum principle \cite{N,T} we need to linearize the operator $\mathcal{P}$.
    Dividing the both sides of the equation by $\lambda(|u|)$ and applying  the conditions (2) and (6)
    we obtain
  \begin{align*}
       \frac{u_t}{\lambda(|u|)} -\frac{a^{ij}(x,t,u)D_{ij}u}{\lambda(|u|)} &\leq |D u|\nu_1(x,t)+\nu_2(x,t)\\
      &\leq  \nu_1(x,t) \sign D_iu\cdot  D_iu  + \nu_2(x,t)
      \end{align*}
where we have used  that $|Du|\leq \sum_{i=1}^n|D_iu|=\sum_{i=1}^n \sign D_iu\cdot  D_iu.$

         Consider now the linear operator
         $$
    \overline{\mathcal{P}} u:=
     \sigma u_t -\overline{a}^{ij}(x,t)D_{ij}u -\overline{b}^i(x,t)D_i u\leq \nu_2(x,t)
    $$
where  $ \sigma=1/\lambda(|u|),$
$\overline{a}^{ij}=a^{ij}(x,t,u)/\lambda(|u|),$ and $\overline{b}^i=\nu_1 \cdot\sign D_iu \in L^{n+1}(Q)$. It  is  strictly parabolic   and denoting   $\overline{\A} = 
\{\overline{a}^{ij}\}$ we have    $\det \overline{\A}\geq 1$ that implies  $Tr\, \overline{\A}+\sigma>0.$ Direct application of the  Maximum principle    \cite{N}  gives
    $$
        \sup_{Q^+} u 
        \leq C(n)d^{\frac{n}{n+1}}\bigg( 1+ \frac{1}{d}\| \nu_1\|^{n+1}_{n+1,Q^+}\bigg)\| \nu_2\|_{{n+1},Q^+}.
    $$
        Analogously,  considering the subset $Q^-\subset Q$ of points in which $u(x,t)<0,$  we can estimate $\sup_{Q^-} (-u)$ in the same way. Unifying the both estimates we obtain \eqref{eq3.1}.
\end{proof}

The interior and boundary Harnack inequalities (cf. \cite{KrS,LU1}) imply Hölder continuity of the solution of \eqref{PD} and give an a priori estimate for it. This estimate allows us to  control the modulus of continuity $\omega(r)$ of $u$ in $Q$ that is, 
\begin{equation}\label{mdc}
 |\omega(r)|\leq C r^\alpha    
\end{equation}
where the constants $C$ and $\alpha$ depend on known quantities  but not on $u$.

The assumption (3) implies discontinuity of the coefficients $a^{ij}$ in $x$ and $t.$
In order to describe the discontinuity of the composition 
$a^{ij}(\cdot,\cdot,u(\cdot,\cdot)),$ 
we need to estimate its $VMO_x$ modulus.  For this goal we use the ideas developed  in \cite{Pl} (see also \cite{MPS,Sf}).
\begin{lem}\label{LM3.2}
Let $(1),(3),(4)$ hold and $u\in C^0(\overline{Q})$. Then 
$$
a^{ij}(x,t,u(x,t))\in VMO_x \cap L^\infty(Q)
$$
with $VMO$ modulus $\eta(r)$ bounded in terms of $\|u\|_{\infty,Q}$ and the modulus of continuity $\omega(r)$ of $u$.
\end{lem}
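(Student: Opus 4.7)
The plan is to bound the $x$-mean oscillation of $\tilde a^{ij}(x,t):=a^{ij}(x,t,u(x,t))$ on an arbitrary parabolic cylinder by freezing the $u$-argument at a convenient constant and splitting the oscillation via the triangle inequality into a ``frozen'' piece (controlled by hypothesis~(3)) and two ``composition'' pieces (controlled by hypothesis~(4) together with the continuity of $u$).

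Set $M:=\|u\|_{\infty,Q}<\infty$, which is finite since $u\in C^0(\overline Q)$; then $\tilde a^{ij}\in L^\infty(Q)$ follows at once from (3) applied with $|u|\le M$. Since $u$ is continuous on the compact set $\overline Q$, it is uniformly continuous with some modulus $\omega(r)\to0$.

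Fix $(x_0,t_0)\in Q$ and $r\le R$, choose the reference constant $u_0:=u(x_0,t_0)\in[-M,M]$, and for any $(y,\tau),(z,\tau)\in I_r(x_0,t_0)\cap Q$ write
\begin{align*}
|\tilde a^{ij}(y,\tau)-\tilde a^{ij}(z,\tau)|
&\le |a^{ij}(y,\tau,u(y,\tau))-a^{ij}(y,\tau,u_0)|\\
&\quad+|a^{ij}(y,\tau,u_0)-a^{ij}(z,\tau,u_0)|\\
&\quad+|a^{ij}(z,\tau,u_0)-a^{ij}(z,\tau,u(z,\tau))|.
\end{align*}
On $I_r(x_0,t_0)$ one has $|y-x_0|<r$ and $|\tau-t_0|<r^2$, so uniform continuity of $u$ yields $|u(y,\tau)-u_0|\le\omega(cr)$ for some absolute $c$; hypothesis (4) combined with the monotonicity of $\mu_M$ then gives the pointwise bound $\mu_M(\omega(cr))$ for both outer terms, which trivially survives averaging. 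The middle term involves only the constant $u_0\in[-M,M]$, so after averaging it is directly controlled, by hypothesis (3), by $\gamma_M^{\#(x)}(r)$.

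Taking the supremum over $(x_0,t_0)\in Q$ and $r\le R$ produces
$$\eta(R):=\sup_{r\le R}\,\sup_{(x_0,t_0)\in Q}\osc_x(\tilde a^{ij},Q_r(x_0,t_0))\le 2\mu_M(\omega(cR))+\gamma_M^{\#(x)}(R),$$
which vanishes as $R\to0$ by (3), (4), and continuity of $u$, and depends only on $\|u\|_{\infty,Q}$ (through $M$) and on $\omega$, as claimed.

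The main subtlety I expect to track is the behaviour near $\partial_p Q$: one must verify that freezing at $u_0=u(x_0,t_0)$ and averaging over the truncated cylinder $Q_r=I_r\cap Q$ rather than $I_r$ itself is still compatible with the definition of $\gamma_M^{\#(x)}$ in (3), and that the zero-extension of $a^{ij}$ outside $\overline Q$ does not spoil the three-term split. Both are routine but require attention to the boundary cylinders, which is why the modulus ends up depending on $\|u\|_{\infty,Q}$ and $\omega$, with no loss.
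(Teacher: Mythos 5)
Your proof is correct and follows essentially the same route as the paper: a three-term triangle-inequality split of the oscillation, with the two composition terms controlled by hypothesis (4) plus the modulus of continuity of $u$, and the frozen term by hypothesis (3). The only (harmless) difference is that you freeze at the constant $u_0=u(x_0,t_0)$ while the paper freezes at $u(x_0,t)$, still depending on $t$; your choice in fact matches the statement of condition (3), which takes the supremum over constant values of $u$, slightly more cleanly.
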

\begin{proof}
Let $(x_0,t_0)\in Q,$  denote by $M_u=\|u\|_{\infty,Q}$ and by $(3)$ we have 
\begin{align*}
    J^{ij}:=&\, \Xint-_{Q_r(x_0,t_0)} 
    \Xint-_{\Omega_r(x_0)}\big|a^{ij}(y,t,u(y,t))-a^{ij}(z,t,u(z,t))\,dz\big|\,dydt\\[6pt]
    \leq&\, \Xint-_{Q_r(x_0,t_0)} \Xint-_{\Omega_r(x_0)} \big|a^{ij}(y,t,u(y,t))-a^{ij}(y,t,u(x_0,t))\big|\\[6pt]
    +&\, \big|a^{ij}(y,t,u(x_0,t))-a^{ij}(z,t,u(x_0,t))\big|\\[6pt]
     +&\, \big|a^{ij}(z,t,u(z,t))-a^{ij}(z,t,u(x_0,t)) \big|\,dzdydt:=J_1+J_2+J_3,
\end{align*}    
after adding and subtracting suitable terms.
We note that $J_1$ does not depend on $z$ and by \eqref{mdc} we get
\begin{align*}
J_1=& \ \Xint-_{Q_r(x_0,t_0)}|a^{ij}(y,t,u(x,t))-a^{ij}(y,t,u(x_0,t))\big|\,dydt\\[6pt]
\leq& \  \Xint-_{Q_r(x_0,t_0)} \mu_M\big(|u(y,t)-u(x_0,t)|\big)\,dydt \leq \mu_M(\omega(r))
\end{align*}
where we have used $(4).$   
Analogously, $J_3$ does not depend on $y$ and we can estimate it in the same way as $J_1$, i.e., 
$
J_3\leq\mu_M(\omega(r))
$
while $J_2$ can be estimated via (3) obtaining  $ J_2\leq a^{ij\#(x)}_R. $
Unifying all these estimates we obtain
$$
  \eta(R)= \max_{1\leq i,j\leq n}\sup_{|u|\leq M_u} \sup_{(x_0,t_0)\in Q_r} \sup_{r\leq R} J^{ij}
   \leq \mu_M(\omega(R)) + a^{ij\#(x)}_R,
$$
where  the right-hand side tends to $0$ as $R\to 0$.
\end{proof}

The following a priori estimate is essential and permit us to determine  the space in which we can apply  the  Leray-Schauder fixed point theorem  (see \cite{MPS,Pl,Sf} and the references therein). 

\begin{lem}\label{LM3.3}
    Assume the condition (1)-(6) to be fulfilled. Then there exists a constant $C$ independent of $u$ such that 
    \begin{equation}\label{3.3}
        \|Du\|_{2(n+1),Q}\leq C
    \end{equation}  
for any solution $ u\in W^{2,1}_{n+1}(Q)\cap C(\ol Q)$  of the problem \eqref{PD}.
\end{lem}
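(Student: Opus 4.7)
The plan is to view the quasilinear equation as a linear one in $u$ with partially $VMO$ coefficients, apply Krylov's $W^{2,1}_{n+1}$ theory after a Bernstein-type substitution that neutralizes the quadratic gradient growth, and close the estimate via a parabolic Gagliardo--Nirenberg inequality. First I would invoke Lemma \ref{LM3.1} to obtain $\|u\|_{\infty, Q} \leq M$ with $M$ depending only on data, and then combine the H\"older estimate \eqref{mdc} with Lemma \ref{LM3.2} to conclude that the composite coefficients $\tilde a^{ij}(x,t) := a^{ij}(x,t,u(x,t))$ lie in $VMO_x \cap L^\infty(Q)$ with a modulus controlled solely by data.

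The crucial step is to handle the quadratic gradient term in $f$. Following the Bernstein-type strategy of \cite{MPS,Pl,Sf}, I would introduce $v = \phi(u)$ with $\phi \in C^\infty(\R)$ strictly increasing and convex, e.g.\ $\phi(s) = \gamma^{-1}(e^{\gamma s} - 1)$, with $\gamma$ sufficiently large compared to $\nu(M)/\lambda(M)$. A direct computation gives
\[
v_t - \tilde a^{ij} D_{ij} v = \phi'(u)\, f - \phi''(u)\, \tilde a^{ij} D_i u\, D_j u,
\]
and the strict ellipticity together with the large choice of $\gamma$ forces the convexity contribution $\phi''(u)\, \tilde a^{ij} D_i u D_j u \geq c\gamma |Du|^2$ to absorb the $|Du|^2$ term arising from (5); the sign condition (6) is then used to control the residual linear-gradient part (rewritten in terms of $|Dv|$ via $D_i v = \phi'(u) D_i u$) as a lower-order perturbation $b^i(x,t)\, D_i v$ with $b^i \in L^{n+1}(Q)$.

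Once the transformed equation is recast as
\[
v_t - \tilde a^{ij} D_{ij} v - b^i D_i v = g(x,t)
\]
with $b^i, g \in L^{n+1}(Q)$ and $v|_{\partial_p Q} = 0$, Krylov's linear $W^{2,1}_{n+1}$ theory for parabolic operators with partially $VMO$ principal coefficients yields $\|v\|_{W^{2,1}_{n+1}(Q)} \leq C$ for a constant depending only on data. Since $\phi$ is smooth with $\phi' \geq c > 0$ on $[-M,M]$, this transfers to $\|u\|_{W^{2,1}_{n+1}(Q)} \leq C$, and the desired bound \eqref{3.3} follows from the parabolic Gagliardo--Nirenberg interpolation
\[
\|Du\|_{2(n+1),Q} \leq C\, \|u\|_{W^{2,1}_{n+1}(Q)}^{1/2}\, \|u\|_{\infty,Q}^{1/2}.
\]

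The main obstacle I anticipate is the Bernstein step: the exponent $2(n+1)$ is critical, since $|Du|^2 \in L^{n+1}$ iff $Du \in L^{2(n+1)}$, so every estimate must be sharp. A careful joint use of the ellipticity (2), the quadratic growth (5) and the sign condition (6)---together with an AM--GM absorption of the linear gradient terms against the convexity contribution---is what brings the right-hand side of the equation for $v$ into the scope of the linear partially $VMO$ theory, independently of the particular solution $u$.
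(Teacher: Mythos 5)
There is a genuine gap at the crucial Bernstein step. The substitution $v=\phi(u)$ with $\phi$ convex gives
\[
v_t-\tilde a^{ij}D_{ij}v=\phi'(u)f-\phi''(u)\tilde a^{ij}D_iu\,D_ju=:g,
\]
and choosing $\gamma$ large indeed yields the \emph{one-sided} bound $g\leq C\,f_1$, because the convexity term has a favorable sign only when it is being subtracted. But to invoke Krylov's $W^{2,1}_{n+1}$ theory you need the actual right-hand side $g$ to lie in $L^{n+1}(Q)$, i.e.\ a \emph{two-sided} bound $|g|\leq C(f_1+\dots)$ with the dots in $L^{n+1}$. Pointwise one only has $|g|\leq C\big(f_1+|Du|^2\big)$, and $|Du|^2\in L^{n+1}(Q)$ is exactly equivalent to $Du\in L^{2(n+1)}(Q)$ --- the conclusion you are trying to prove. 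A concave substitution controls $g$ from below but loses the bound from above, and no single change of unknown controls both signs; supersolution/subsolution inequalities of this type are suited to maximum-principle ($L^\infty$) bounds, not to $L^p$ estimates of second derivatives. So as written the argument is circular at its central step.

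The paper circumvents precisely this obstruction by a continuation argument rather than a change of unknown: it embeds \eqref{PD} into the one-parameter family \eqref{3.5} with right-hand side $\sigma G$, proves via the maximum principle that $\|v_{\sigma_2}-v_{\sigma_1}\|_{\infty,Q}\leq(\sigma_2-\sigma_1)(M_u+\nu(M_u))$, and then applies the linear $VMO_x$ theory to the equation for the difference $w=v_{\sigma_2}-v_{\sigma_1}$. The quadratic term $B(|Dv_2|^2-|Dv_1|^2)$ is then absorbed through the Solonnikov interpolation $\|Dw\|^2_{2(n+1),Q}\leq C\|w\|_{\infty,Q}\|D^2w\|_{n+1,Q}$, using that $\|w\|_{\infty,Q}=O(\tau)$ is small when the parameter step $\tau$ is small; iterating over finitely many steps of length $\tau$ reaches $\sigma=1$. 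Your outline correctly identifies the ingredients (boundedness, the $VMO_x$ modulus of the composite coefficients, Krylov's linear theory, the interpolation inequality) and correctly flags the criticality of the exponent $2(n+1)$, but the mechanism you propose for absorbing $|Du|^2$ does not close; the smallness that makes the absorption work must come from somewhere, and in the paper it comes from the short parameter steps, not from convexity of $\phi$.
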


\begin{proof}
Making use of the condition (5) and Lemmata~\ref{LM3.1} and~\ref{LM3.2}, we transform the equation $\mathcal{P} u=f$ into
$$
u_t-A^{ij}(x,t)D_{ij}u+B(x,t)|Du|^2+f_1(x,t)u=G(x,t)
$$
where
\begin{align}\label{eq 3.4} 
    \begin{split}
    &A^{ij}(x,t)=a^{ij}(x,t,u(x,t)), \qquad A^{ij}\in VMO_x\cap L^\infty(Q)\\[5pt]
    &B(x,t)=-\frac{f(x,t,u,Du)}{f_1(x,t)+|Du|^2}, \qquad \|B\|_{\infty,Q}\leq \nu(M_u)<\infty,\\[5pt]
    &G(x,t)=f_1(x,t)u+ \frac{f(x,t,u,Du)}{f_1(x,t)+|Du|^2}\,f_1(x,t),\\[5pt]
    & \|G\|_{n+1,Q}\leq \|f_1\|_{n+1,Q}\big(M_u+\nu(M_u)\big).
    \end{split}
\end{align}

Fixing  $u\in W^{2,1}_{n+1}(Q)\cap C(\ol{Q})$ at the data of the operator,    we  can  consider the one-parameter family  of problems  
\begin{equation}\tag{$\mathcal{F}_\sigma$}\label{3.5}
    \begin{cases}
    D_tv_\sigma - A^{ij}(x,t)D_{ij}v_\sigma +B(x,t)|D v_\sigma|^2&\\
    \phantom{D_tv_\sigma} +f_1(x,t)v_\sigma=\sigma G(x,t) & \text{ a.e. in }Q,\\[5pt]
    v_\sigma(x,0)=0  & \text{ on }\partial_pQ
\end{cases}
\end{equation}
with $\sigma\in[0,1].$ 
 Suppose that there exists a solution $v_\sigma\in W^{2,1}_{n+1}(Q)\cap C(\ol{Q})$  of \eqref{3.5} and write formally   $v_\sigma=\mathcal{F}(\sigma)$ to indicate it.

Then the question of the  uniqueness of that  solution   for any  $\sigma\in[0,1]$ arises naturally. 
\begin{prp}\label{prp1}
Let $0\leq \sigma_1<\sigma_2\leq 1$, $v_1=\mathcal{F}(\sigma_1),$ and $v_2=\mathcal{F}(\sigma_2)$. Then
\begin{equation}\label{3.6}
    \|v_2-v_1\|_{\infty,Q}\leq (\sigma_2 -\sigma_1)(M_u+\nu(M_u)).
\end{equation}
\end{prp}
\begin{proof}

Let   $w=v_2-v_1=\mathcal{F}(\sigma_2) - \mathcal{F}(\sigma_1),$  that is 
$$
\begin{cases}
     w_t-A^{ij}(x,t)D_{ij}w+B(x,t)(|Dv_2|^2-|Dv_1|^2)&\\
  \qquad +f_1(x,t)w=(\sigma_2-\sigma_1)G(x,t) &   \text{ a.e. in } Q\\[5pt]
    w(x,0)=0 &\text{ on }\partial_pQ.    
\end{cases}
$$
 Further, we  linearize the equation  transforming the nonlinear term as follows 
 \begin{equation}\label{eqB}
\begin{split}
B(x,t)(|Dv_2|^2-|Dv_1|^2)&= B(x,t)   \sum_{i=1}^n (D_iv_2+D_iv_1) D_iw\\        
&=: B^i(x,t) D_iw
\end{split}
\end{equation}
where   $B^i(x,t)= B(x,t)D_i(v_2+v_1)\in L^{n+1}(Q). $
Then we consider the linear problem
$$
\begin{cases} 
  \mathcal{P}_1w:=   w_t-A^{ij}(x,t)D_{ij}w+B^i(x,t)D_iw&\\
\phantom{ \mathcal{P}_1w:=  }\qquad +f_1(x,t)w=(\sigma_2-\sigma_1)G(x,t) &  \text{a.e. in } Q\\[5pt]
  \phantom{ \mathcal{P}_1w:=  }   w(x,0)=0 & \text{ on }\partial_pQ.    
\end{cases}
$$
By \eqref{eq 3.4} we deduce that 
$$
G(x,t)\leq f_1(x,t)(M_u+\nu(M_u)).
$$
Putting $M=(\sigma_2-\sigma_1)(M_u+\nu(M_u))$ we  obtain immediately that  
\begin{equation}\label{eq-2-12}
\begin{cases}    \mathcal{P}_1(w-M)\leq 0 &\quad \text{ a.e. in } Q\\[5pt]
    w-M\leq 0 &\quad  \text{  on } \partial_pQ
\end{cases}
\end{equation}
and hence the   maximum principle (cf. \cite{N})  gives $w\leq M$ a.e.  in $Q$ by the   maximum principle  \cite{N}. 

Taking  now  $\overline{w}=v_1-v_2=\mathcal{F}(\sigma_1)-\mathcal{F}(\sigma_2)$ and repeating the above considerations we obtain 
$$
\begin{cases}
\mathcal{P}_1\overline{w}=-(\sigma_2-\sigma_1)G(x,t) 
\leq  (\sigma_2-\sigma_1)f_1(M_u+\nu(M_u)) =\mathcal{P}_1 M\\
\overline{w}- M\leq 0
\end{cases}
$$
and by   the maximum principle it follows that  $\overline{w}\leq M$ a.e.  in $Q$ and hence  \eqref{3.6}.
\end{proof}

An immediate consequence of the Proposition~\ref{prp1} is the uniqueness of the solution of  \eqref{3.5}, providing that it exists.  In fact, putting $\sigma_1=\sigma_2$ in  \eqref{3.6} we get immediately $v_1=v_2$. 
Moreover,  \eqref{3.6} gives an estimate for  the norm $\|v_\sigma\|_{\infty,Q}$ of  any solution  of the problem \eqref{3.5}. 
In fact, setting $\sigma_1=0$  in \eqref{3.6}, one gets immediately
\begin{equation}\label{3.7}
\|v_\sigma\|_{\infty,Q}\leq M_u + \nu (M_u) \qquad \forall \  \sigma\in(0,1].
\end{equation}
We can also notice that    $v_0=\mathcal{F}(0)=0$ and  $v_1=\mathcal{F}(1)=u.$

Our goal now is to estimate the norm of the gradient of the solution of \eqref{3.5}, providing that it exists.

Consider again two solutions $v_1=\mathcal{F}(\sigma_1)$ and $v_2=\mathcal{F}(\sigma_2)$ of \eqref{3.5}  such that $0\leq  \sigma_1<\sigma_2\leq 1$.  Their difference $w=v_2-v_1$ solves the linear problem
\begin{equation}\label{3.8}
\begin{cases}
    w_t-A^{ij}(x,t)D_{ij}w+f_1(x,t)w=F(x,t)  &  \text{ a.e in }Q\\[5pt]
    w(x,0)=0  & \text{ on }\partial_pQ,
\end{cases}
\end{equation}
where
$$
F(x,t)=(\sigma_2-\sigma_1)G(x,t)-B(x,t)\big(|Dv_2|^2-|Dv_1|^2\big)
$$
with a norm
\begin{align*}
\|F\|_{n+1,Q}\leq&\,(\sigma_2-\sigma_1)\|G\|_{n+1,Q}\\
+&\, \|B\|_{\infty,Q}\big(\|Dw\|_{2(n+1),Q}^2+2\|Dv_1\|_{2(n+1),Q}^2\big).
\end{align*}
    
Choosing $\sigma_2-\sigma_1=\tau$  small enough  and using \eqref{eq 3.4} and \eqref{3.6} we obtain
\begin{align*}
     \|F\|_{n+1,Q}&\leq \, 
    \tau \|f_1\|_{n+1,Q}\big(M_u+\nu(M_u)\big)\\
    &+\, \nu(M_u)\big(\|Dw\|_{2(n+1),Q}^2+2\|Dv_1\|_{2(n+1),Q}^2\big)\\
    &\leq \,C_1\big(1+\|Dw\|_{2(n+1),Q}^2+2\|Dv_1\|_{2(n+1),Q}^2\big),
\end{align*}
where $C_1:=\max\{ \tau \|f_1\|_{n+1,Q}(M_u+\nu(M_u)), \nu(M_u)\}$. As it was already shown, the coefficients $A^{ij}(x,t)$ belong to $VMO_x\cap L^\infty(Q)$ and therefore the problem \eqref{3.8} has a unique solution which satisfies the a priori estimate
\begin{align*}
    \|w\|_{W^{2,1}_{n+1}(Q)}\leq C\|F\|_{n+1,Q}\leq C_1(1+\|Dw\|^2_{2(n+1),Q}+2\|Dv_1\|^2_{2(n+1),Q})
\end{align*}
with a  constant $C_1$ depending  on the data of the problem but not on the solution. In order to approximate the norm of the gradient we adopt the  Solonnikov inequality that is an anisotropic version of the  Gagliardo-Nirenberg  interpolation inequality  (cf. \cite{Ni,Sl}) to the parabolic case, obtaining

\begin{align*}
    \|Dw\|^2_{2(n+1),Q}&\leq C\|w\|_{\infty,Q}\|D^2w\|_{n+1,Q}\\
    &\leq C\tau \big(M_u +\nu(M_u)\big)\|D^2w\|_{n+1,Q}\\
    &\leq C_1\tau\|w\|_{W^{2,1}_{n+1}(Q)}
    \leq C_1\tau\big(1+\|Dw\|^2_{2(n+1),Q}+\|Dv_1\|^2_{2(n+1),Q}\big)
\end{align*}
with a constant $C_2$ depending on known quantities.

Choosing $\tau$ smaller, if necessary, such that $C_2\tau<1,$ we get
$$
\|Dw\|^2_{2(n+1),Q}\leq C_3\big(1+\|Dv_1\|^2_{2(n+1),Q}\big).
$$
Since $v_2=w+v_1$, we deduce that
\begin{equation}
    \begin{split}\label{3.9}
   \|Dv_2\|^2_{2(n+1),Q}&\leq \|Dw\|^2_{2(n+1),Q}+\|Dv_1\|^2_{2(n+1),Q} \\
    &\leq C_4(1+ \|Dv_1\|^2_{2(n+1),Q}).
    \end{split}
\end{equation}
Taking $\sigma_1=0$ and $\sigma_2=\tau,$ that is $v_1=0$ and  $v_2=v_\tau$  we obtain the a  priori estimate 
\begin{equation}\label{eq3.10}
\|Dv_\tau\|^2_{2(n+1),Q}\leq C_4.
\end{equation}

To prove the solvability of \eqref{3.5} under the assumption $\sigma=\tau$, we consider the operator
$$
\mathfrak{F}: \mathcal{S}(Q)\longrightarrow W^{2,1}_{n+1}(Q)\cap C(\ol{Q})
$$
which associates to  any $z\in\mathcal{S}(Q)=\{z\in C^0(\overline{Q}):Dz\in L^{2(n+1)}(Q)\}$ the  unique solution $v\in W^{2,1}_{n+1}(Q)\cap C(\ol{Q})$ of the linear problem
\begin{equation}\tag{$\mathfrak{F}$}\label{eq-farcF}
    \begin{cases}
v_t-A^{ij}(x,t)D_{ij}v=\tau G(x,t)-B(x,t)|Dz|^2-f_1(x,t)z & \text{ a.e in }Q,\\
v(x,0)=0  &  \text{ on } \partial_pQ,
\end{cases}
\end{equation}
and we adopt the notion $\mathfrak{F}(z)=v$ to indicate this solution.
The existence of the last one follows by \eqref{eq 3.4} and \cite{Kr}.

The Sobolev embedding theorem implies a compact embedding of the space $W^{2,1}_{n+1}(Q)\cap C(\ol{Q})$ into $\mathcal{S}(Q)$ that gives also compactness of the operator $\mathfrak{F}$ considered as a mapping from  the Banach space  $\mathcal{S}(Q)$ into itself.
Let us  consider now the one parameter family of problems $\mathfrak{F}_\rho :\mathcal{S}(Q)\times[0,1]\to \mathcal{S}(Q)$ defined as  
\begin{equation}\tag{$\mathfrak{F}_\rho$}
    \begin{cases}
v_t-A^{ij}(x,t)D_{ij}v&\\
\phantom{v_t}=\rho(\tau G(x,t)-B(x,t)|Dv|^2-f_1(x,t)v)& \text{ a.e in }Q,\\[5pt]
v(x,0)=0  &  \text{ on } \partial_pQ.
\end{cases}
\end{equation}
If we can show that for any $\rho\in[0,1]$ the norm of the  solution of $\mathfrak{F}_\rho(v)=v$ can be estimated via a constant independent of $\rho$ than  we will  by able to apply   the Leray-Schauder theorem  that ensures the existence of a fixed point of the problem $\mathfrak{F}_1(v)=v,$  which is exactly the solution  of \eqref{3.5} with $\sigma=\tau.$

In fact, the desired estimate follows as    \eqref{3.7} and \eqref{eq3.10}, that is   $\|v\|_{\mathcal{S}(Q)}\leq C$ with a constant independent of $v$ and $\rho\in[0,1]$.

To conclude the proof of Lemma \ref{LM3.3}, we cover the interval $[0,1]$ by a finite number of subintervals each one of length $\delta\leq\tau$. Then, iterating the above procedure and employing \eqref{3.9}, we estimate $\|Dv_{k+1}\|_{2(n+1),Q}$ in terms of $\|Dv_k\|_{2(n+1),Q}$ for $k=0,1,\dots,m-1$ with $m\delta=1$. Since \eqref{3.5} with $\sigma=1$ coincides with the original problem \eqref{PD}, having in mind the uniqueness of the solution to \eqref{3.5} already proved, we obtain the desired estimate \eqref{3.3}.
\end{proof}

\section{Main results}

\begin{proof}[Theorem~\ref{thm1}~(Existence)] To prove  existence of solution of \eqref{PD}, we use one more time the Leray-Schauder theorem. Fixing an arbitrary function $v\in\mathcal{S}(Q)$ in the data $a^{ij}(x,t,v)$ and $f(x,t,v,Dv)$ of \eqref{PD}, we linearize the problem obtaining
\begin{equation}\tag{$\mathfrak{F}_2 $}
     \begin{cases}
 D_tu-a^{ij}(x,t,v)D_{ij}u=f(x,t,v,Dv)  \quad &\text{a.e in }Q,\\
    u(x,0)=0 &\text{on } \partial_p Q.
    \end{cases}
\end{equation}
Then me obtain a  linear strictly parabolic equation with bounded $VMO_x$ principal coefficients and right-hand side belonging to $L^{n+1}(Q)$. Therefore, by \cite{Kr}, there exists a  unique strong solution $u\in W^{2,1}_{n+1}(Q)\cap C(\ol{Q})$ and we indicate it  with $u=\mathfrak{F}_2(v)$.

The compactness of $\mathfrak{F}_2 $ acting from $\mathcal{S}(Q)$  into itself, does follow in the same way as the compactness of $\mathfrak{F} $ above.

To prove the continuity of $\mathfrak{F}_2$ we take a sequence 
$\{v_h\}_{h=1}^\infty\in \mathcal{S}(Q)$ such that $\|v_h-v\|_{\mathcal{S}(Q)}\to 0$  and set $u_h=\mathfrak{F}_2(v_h)$. Then we consider the difference  $u_h-u$ which is a solution of $\mathfrak{F}_2(v_h)- \mathfrak{F}_2(v)$
\begin{align*}
\begin{cases}
  D_t(u_h-u)-a^{ij}(x,t,v_h)D_{ij}(u_h-u)&\\
  \phantom{D_t(u_h-u)}=\big(a^{ij}(x,t,v_h)-a^{ij}(x,t,v)\big)D_{ij}u&\\
  \phantom{D_t(u_h-u)}+f(x,t,v_h,Dv_h)-f(x,t,v,Dv) &\text{a.e in }Q\,,\\[6pt]
  u_h(x,0)-u(x,0)=0  &\text{on } \partial_p Q\,.
\end{cases}
\end{align*}
Further,  \cite[Theorem 2.1]{Kr} yields an a priori estimate of the solution
\begin{equation}\label{N-O}
\begin{split}
       \|u_h-u\|_{W^{2,1}_{n+1}(Q)}&  \leq  C \Big[\|(a^{ij}(\cdot,v_h)-a^{ij}(\cdot,v)) D_{ij}u\|_{n+1,Q}\\
&+\|f(\cdot,v_h,Dv_h)-f(\cdot,v,Dv)\|_{n+1,Q}\Big]\\
&\leq \  C 
\Big[\|\mu(v_h-v) D_{ij}u\|_{n+1,Q}\\
  & +\|f(\cdot,v_h,Dv_h)-f(\cdot,v,Dv)\|_{n+1,Q}\Big]
\end{split}
\end{equation}
 Condition (4) ensures the convergence of the first  term while condition  (5) ensures the  continuity of the Nemytskii operator
$$
 \mathcal{S}(Q)\ni  v\mapsto f(x,t,v,Dv)\in L^{n+1}(Q).
$$ 
 Hence, the right-hand side in \eqref{N-O} tends to zero when $h\rightarrow\infty$ which gives continuity of $\mathfrak{F}_2$.
Further on, the estimates  \eqref{eq3.1} and \eqref{3.3} ensure $\|u\|_{\mathcal{S}(Q)}\leq C$ with a constant independent of $u$ and $\rho\in[0,1]$. This ensures existence of a fixed point $u\in \mathcal{S}(Q)$ of  the operator $\mathfrak{F}_2,$ which belongs to  $ W^{2,1}_{n+1}(Q)\cap C(\ol{Q})$ by the definition of $\mathfrak{F}_2,$ and this implies existence of   solution to \eqref{PD}.
\end{proof}

\begin{proof}[Theorem~\ref{thm2}~(Uniqueness)]
We argue by contradiction. 
Supposing that  $u,v\in W^{2,1}_{n+1}(Q)\cap C(\ol{Q})$ are two solutions of \eqref{PD}, then
$$
\begin{cases}
     D_t(u-v)-a^{ij}(x,t)D_{ij}(u-v)=f(x,t,u,Du)- f(x,t,v,Dv)  &\text{ a.e in }Q\,,\\
  u-v=0  & \text{ on } \partial_pQ\,.
\end{cases}
$$
Let  $Q^+=\{(x,t)\in Q: u(x,t)>v(x,t)\}$. Since $f(x,t,z,p)$ is non-increasing in $z$, it follows that $f(x,t,u,Du)-f(x,t,v,Du)\leq 0$    for a.a. $(x,t)\in Q^+.$ Later on, using the condition \eqref{uniq}  and denoting  $M_v=\|v\|_{\infty,Q}$, we can rewrite the equation above as follows
\begin{align*}
 D_t(u-v)&-a^{ij}(x,t)D_{ij}(u-v)=f(x,t,u,Du)-f(x,t,v,Du)\\
 & +  f(x,t,v,Du)-f(x,t,v,Dv)\\
 &\leq  \sup_{|v|\leq M_v}f_2(x,t,v)|Du-Dv|\\
 & = \sup_{|v|\leq M_v}f_2(x,t,v) \sum_{i=1}^n (\sign D_i(u-v))D_i(u-v) .  
\end{align*}
Therefore taking $b^i=- \sup_{|v|\leq M_v}f_2(x,t,v) \cdot \sign D_i(u-v)$ we obtain the problem
$$
\begin{cases}
    D_t(u-v)-a^{ij}(x,t)D_{ij}(u-v) +b^iD_i(u-v)\leq 0 &\text{ a.e in }Q\,\\
    u-v=0 & \text{ on } \partial_p Q\,.
\end{cases}
$$
The maximum principle gives $\max_{Q^+}(u-v)\leq 0$. Analogously, taking the set  $Q^-=\{(x,t)\in Q:v>u\}$ we get $\max_{Q^-}(v-u)\leq 0$ which gives $u=v$ in $Q$ and thus the claim of Theorem \ref{thm2}.
\end{proof}

{\bf Acknowledgement.} {\small
The author is indebted to  the referee for the variable report that brings to the improvement of the paper.
The author is a member of INDAM-GNAMPA.}

\end{document}